\renewcommand\tree{{\mathscr T}}
\begin{document}
\title{Self-similar products of groups}
\date{May 12th, 2018}

\author{Laurent Bartholdi}
\address{L.B.: D\'epartement de Math\'ematiques et Applications, \'Ecole Normale Sup\'erieure, Paris \textit{and} Mathematisches Institut, Georg-August Universit\"at zu G\"ottingen}
\email{laurent.bartholdi@gmail.com}

\author{Said N. Sidki}
\address{S.S.: Departamento de Matematica, Universidade de Bras\'ilia, Bras\'ilia DF 70910-900, Brazil}
\email{ssidki@gmail.com}

\thanks{This work is supported by the ``@raction'' grant ANR-14-ACHN-0018-01. The second author thanks the Mathematics Department of the \'Ecole Normale Sup\'erieure for the warm hospitality during April of 2018}

\begin{abstract}
  We address the problem of determining the class of self-similar
  groups, and in particular its closure under restricted direct
  products. We show that the group $\Z^{(\omega)}$ is self-similar,
  that $G^{(\omega)}\rtimes C_2$ is self-similar whenever $G$ is, and
  that permutational wreath products of a finite abelian group with a
  self-similar group are self-similar.
\end{abstract}
\maketitle

%%%%%%%%%%%%%%%%%%%%%%%%%%%%%%%%%%%%%%%%%%%%%%%%%%%%%%%%%%%%%%%%
\section{Introduction}
A self-similar group is a group $G$ admitting a faithful, self-similar
action on a regular\footnote{This condition will be relaxed
  in~\S\ref{ss:ssg}.} rooted tree $\tree$. By this we mean that $G$
acts transitively on the neighbours of the root, that the tree
$\tree_v$ hanging from any such neighbour $v$ is isomorphic to
$\tree$, and that the action of the stabilizer $G_v$ on $\tree_v$,
when conjugated back to $\tree$ via the isomorphism of trees,
coincides with the original action.

Very little is known on which groups admit such an action, and for now
only sporadic constructions exist --- more often than not, a group is
self-similar if it is already given as such. Notable exceptions are
general constructions of linear or affine groups oven residually
finite rings~\cites{brunner-s:glnz,nekrashevych-s:sc}, and nilpotent
groups admitting
dilations~\cite{berlatto-sidki:nilpotent,nekrashevych-pete:scale-invariant,bondarenko-kravchenko:nilpotent}. In
particular, all $2$-step finitely generated torsion-free nilpotent
groups are self-similar~\cite{berlatto-sidki:nilpotent}; but lattices
in Dyer's example~\cite{dyer:nilpotent} of a Lie group without
dilations cannot be self-similar.  Free groups
% , and many free products of finite groups~\cite{savchuk-vorobets:},
are self-similar~\cite{vorobets:free}, but $\Z\wr\Z$ is
not~\cite{dantas-sidki:wreath}*{Theorem~1}.

On the one hand, a self-similar group must be residually finite; but
few other necessary conditions are known. In particular, a
self-similar group need not have solvable word
problem~\cite{bartholdi:wordorder}.

Is is also not yet well understood under which operations the class of
self-similar groups is closed. We note in~\S\ref{ss:barakah} that if
$G$ is self-similar, then $G^d$ is self-similar for all $d\in\N$. Let
$G^{(\omega)}$ denote the restricted direct product of countably many
copies of $G$. We attract attention to the following question:
\begin{question}\label{question:main}
  Let $G$ be a self-similar group. Is $G^{(\omega)}$ self-similar?
\end{question}

In~\S\ref{ss:barakah}, we show in Proposition~\ref{prop:c2} that if
$G$ is self-similar, then $G^{(\omega)}\rtimes C_2$ is also
self-similar, for any free action of $C_2$ on $\omega$.  We then
answer Question~\ref{question:main} positively for $G=\Z$:
\begin{thm}\label{thm:zomega}
  There exists a self-similar action of $\Z^{(\omega)}$ on the binary
  rooted tree. However, there does not exist any finite-state such
  action, nor any such action with a non-trivial semi-invariant
  subgroup (see~\ref{ss:second} for definitions).
\end{thm}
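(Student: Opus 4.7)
The plan is to split the theorem into its three assertions --- existence of a self-similar action, impossibility of a finite-state one, and impossibility of one with a non-trivial semi-invariant subgroup --- and address each in turn.

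For existence, I would construct a wreath recursion $\phi\colon \Z^{(\omega)} \to \Z^{(\omega)} \wr C_2$ defined on the standard basis $(e_i)_{i \ge 1}$, starting with $\phi(e_1) = (e_1, 0)\sigma$ (the adding machine). Since $\Z^{(\omega)}$ is abelian, the remaining $\phi(e_i) = (u_i, v_i)\sigma^{a_i}$ must be chosen so that all pairs commute in the wreath product; a direct computation shows this forces $u_i - v_i$ to be common to all swap generators and $u_j = v_j$ for all identity generators. The crucial test is faithfulness: writing $K$ for the level-$1$ stabilizer, faithfulness is equivalent to the intersection $\bigcap_n L^{-n}(K)$ being trivial, where $L\colon K \to \Z^{(\omega)}$ is the left-section map.

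I would then search for a recursion under which $L$ is strictly contracting --- that is, with no nontrivial fixed or periodic points in $K$. The most symmetric guesses such as $\phi(e_i) = (e_i, e_i)$ or $\phi(e_i) = (e_1 + e_i, e_i)\sigma$ fail because of fixed elements like $e_i - e_j$; a working construction seems to require a genuine ``shift of depth'' in which each $e_i$'s sections are expressed in basis elements of strictly larger index. Once a candidate recursion is found, checking faithfulness reduces to verifying that $L$ acts on $K$ like a one-sided shift with no periodic orbits other than $0$.

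For the negative assertions, I would use a rank argument. If the action were finite-state, each $e_i$ would have finitely many iterated sections, generating a finite-rank subgroup $H_i \le \Z^{(\omega)}$; a recursive expansion of the wreath recursion would show that the whole group action factors through $\bigcup H_i$, which, with some care, can be bounded above by some $\Z^n$, contradicting that $\Z^{(\omega)}$ has infinite rank. A non-trivial semi-invariant subgroup $N$ would give an induced self-similar action of a torsion-free abelian quotient of strictly smaller ``complexity'', and iterating this reduction together with the contraction of $L$ should yield an absurd rank inequality. The main obstacle is clearly the existence construction: balancing the rigid commutativity constraints against the requirement of a contracting section map $L$ with no periodic orbits is delicate, and the honest verification of faithfulness is the step I would spend the most effort on.
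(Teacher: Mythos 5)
Your decomposition into three assertions matches the paper's, but the proposal has genuine gaps in all three. The most serious is the existence part: you describe a search for a wreath recursion balancing commutativity against contraction of the section map $L$, note that the natural candidates fail, and stop at ``once a candidate recursion is found'' --- so no action is actually constructed, and you yourself identify faithfulness as the unresolved step. The paper sidesteps this entirely by working inside the $2$-adics: fix a transcendental $\eta\in\Z_2$ with $\eta\equiv2\pmod 4$, set $G=\Z[1/\eta]\cap\Z_2$, $H=G\cap2\Z_2$, and $f(a)=a/\eta$. Transcendence of $\eta$ gives $G\cong\Z^{(\omega)}$, and faithfulness is immediate because the induced tree action is just translation of $\Z_2$ on itself under the base-$\eta$ identification of $\partial\tree$ with $\Z_2$. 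No case analysis of recursions, and no periodic-orbit verification, is needed.

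The negative assertions also do not go through as you sketch them. For finite-state: you propose that the finitely many iterated sections of each $e_i$ generate finite-rank subgroups $H_i$ whose union ``can be bounded above by some $\Z^n$''; but the union of the $H_i$ over all generators is all of $\Z^{(\omega)}$, so no such bound exists, and no contradiction with infinite rank arises this way. The correct mechanism is local, not global: a \emph{single} finite-state element already yields a non-trivial finite-rank semi-invariant subgroup $G_0$ with $[G_0:G_0\cap H]=[G:H]$, and the paper's Proposition on semi-invariant subgroups then produces a non-trivial $f$-invariant subgroup of $H$ (hence a non-trivial core), contradicting faithfulness --- the contradiction is with faithfulness, not with rank. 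For the semi-invariant assertion, your reduction to a quotient of ``smaller complexity'' is too vague to assess and misses the actual argument, which takes $G_0$ of minimal rank, passes to its isolator, writes $f$ in block-triangular form $\bigl(\begin{smallmatrix}f_0&0\\ g&f_1\end{smallmatrix}\bigr)$, and exploits that the minimal polynomial $\chi$ of $f_0$ has no monic factor to show $\chi(f)\neq0$ and to exhibit the explicit invariant subgroup $K=\langle v\chi(f)f^n:n\ge0\rangle\le H$. You would need to supply a concrete construction for the first part and replace the rank-counting by a core argument for the second before this could be considered a proof.
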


This extends readily to $G$ any finitely generated abelian
group. However, we do not know if $\Z_2^{(\omega)}$ is self-similar,
nor $\Z^\omega$, the unrestricted direct product.

Finally, in~\S\ref{ss:lagniappe} we give a fairly general construction
of self-similar wreath products with a finite abelian group, see
Proposition~\ref{prop:wreath}.

There are at least four essentially equivalent languages with which
one may describe self-similar groups: ``bisets'', ``wreath
recursions'', tree actions, and ``virtual endomorphisms'' as in this
text. One of our aims is to show how powerful and concise this
language can be.

%%%%%%%%%%%%%%%%%%%%%%%%%%%%%%%%%%%%%%%%%%%%%%%%%%%%%%%%%%%%%%%%
\section{Self-similar groups}\label{ss:ssg}
Consider a self-similar group $G$ acting on the right on a spherically
homogeneous rooted tree $\tree$. Assume that the action of $G$ is
transitive on the neighbours of the root $\lambda$ of $\tree$, and
consider a vertex $v$ at distance $1$ from $\lambda$. The
self-similarity means, in this notation, that the subtree $\tree_v$
growing at $v$ embeds (say by an isometry $\tau\colon\tree_v\to\tree$)
into the whole tree, and we have $G_v\tau\subseteq \tau G$ as maps
$\tree_v\to\tree$ composed left-to-right. Abstracting away from the
tree, the group $G$ admits a finite-index subgroup $H=G_v$ and a
homomorphism $f\colon H\to G$ given by $\tau\circ h=h^f\circ\tau$ qua
maps $\tree_v\to\tree$. We summarize the data as
$f\colon G\ge H\to G$, and call $f$ the \emph{virtual endomorphism} of
the action.

Starting from a virtual endomorphism $f\colon G\ge H\to G$, a tree
action may readily be constructed. Set $H_0=G$ and
$H_n=H_{n-1}^{f^{-1}}$ for all $n\ge1$. We define $\tree$ as the coset
tree of the $H_n$'s: $\tree=\bigsqcup_{n\ge0}H_n\backslash G$, with
natural right action of $G$ by translation. Note that if $f$ is
surjective, then we have $[G:H]=[H_{n-1}:H_n]\coloneqq m$ for all $n$,
so $\tree$ is an $m$-regular rooted tree. The vertex $H=H_1$ is at
distance $1$ from the root $H_0$, the subtree $\tree_H$ is
$\tree\cap H$, and the isomorphism $\tree_H\to\tree$ is given by
$H_n g\mapsto H_{n-1}g^f$ for all $H_n g\subseteq H$. Set also
$H_\omega=\bigcap_{n\ge0}H_n$; then $H_\omega$ is the stabilizer in
$\partial\tree$ of the ray $(H_0,H_1,\dots)$. We call it the
\emph{parabolic subgroup} of the action. We note that $\ker(f)$ is
contained in $H_\omega$.
\begin{lem}\label{lem:inj}
  The induced map
  $f\colon H\backslash H_\omega\to G\backslash H_\omega$ is injective.
\end{lem}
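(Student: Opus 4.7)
The plan is to unwind the recursive definition of the filtration $(H_n)_{n\ge 0}$ and observe that being mapped into $H_\omega$ by $f$ forces the original element already to lie in $H_\omega$.

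First I would verify that $f$ genuinely induces a map on the relevant coset spaces by checking that $f(H_\omega)\subseteq H_\omega$. If $h\in H_\omega=\bigcap_{n\ge0}H_n$, then in particular $h\in H_{n+1}=f^{-1}(H_n)$ for every $n\ge0$, so $f(h)\in H_n$ for every $n$, hence $f(h)\in H_\omega$. In passing, $H_\omega\subseteq H_1=H$, so $f$ is defined on all of $H_\omega$ and the induced map between the coset spaces makes sense.

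For injectivity, I would take $h_1,h_2\in H$ whose $f$-images are congruent modulo $H_\omega$, and set $h=h_1h_2^{-1}\in H$, so that $f(h)\in H_\omega$. The goal is to conclude $h\in H_\omega$. Since $f(h)\in H_n$ for every $n\ge0$ and $h\in H$, the recursion $H_{n+1}=f^{-1}(H_n)$ immediately yields $h\in H_{n+1}$ for every $n\ge0$. Combined with $h\in H_0\cap H_1$, this gives $h\in H_n$ for all $n\ge0$, i.e.\ $h\in H_\omega$, as required.

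I do not foresee any serious obstacle here: the argument is essentially a direct unwinding of the definitions, once one notices that the filtration is decreasing (so $\bigcap_{n\ge1}H_n=\bigcap_{n\ge0}H_n$) and that the base levels $H_0=G$ and $H_1=H$ must be accounted for when assembling the final intersection. The observation that $\ker(f)\subseteq H_\omega$ already recorded in the paragraph preceding the lemma is a special case of what is being proved (take $f(h)=1\in H_\omega$).
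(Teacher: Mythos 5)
Your proof is correct and takes essentially the same route as the paper: the paper's one-line appeal to ``maximality of $H_\omega$'' is precisely your unwinding of the recursion $H_{n+1}=H_n^{f^{-1}}$, showing that $h\in H$ with $h^f\in H_\omega$ forces $h\in\bigcap_{n\ge0}H_n=H_\omega$. You additionally check well-definedness ($H_\omega^f\le H_\omega$), which the paper leaves implicit.
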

\begin{proof}
  If $a,b\in H$ are such that $H_\omega a^f=H_\omega b^f$, then
  $b a^{-1}\in H_\omega$ by maximality of $H_\omega$, so
  $H_\omega a=H_\omega b$.
\end{proof}

The assumption that the action be faithful translates to the map $f$
being \emph{core-free}, in the sense that there is no non-trivial
subgroup $K\le H$ with $K\triangleleft G$ and $K^f\le K$. Indeed,
there is evidently a maximal such $K$, called the \emph{$f$-core} of
$H$, which coincides with the kernel of the action of $G$ on
$\tree$. If the series $H_n$ is normal, then $H_\omega$ coincides with
the $f$-core of $H$.

Consider a virtual endomorphism $f\colon G\ge H\to G$, and choose a
right transversal $T$ to $H$, so $G=H T$; for $g\in G$, write
$\overline g\in T$ its representative in $T$. The map $f$ extends to a
homomorphism $\hat f\colon G\to G^T\rtimes\sym T$ by the rule
\[a^{\hat f} = (t\mapsto (t a(\overline{t a})^{-1})^f)\cdot (t\mapsto \overline{t a});
\]
this is essentially the contents of the Kaloujnine-Krasner
theorem~\cite{kaloujnine-krasner:extensions}. An element $g$ is called
\emph{finite-state} there is a finite subset $S\subseteq G$ containing
$g$ such that $S^{\hat f}\subseteq S^T\times\sym T$. A self-similar
group is finite-state if all its elements are finite-state; it is
enough to check this on a generating set. Note that a self-similar
group may be finite-state for one choice of transversal and not
finite-state for another choice.

A basic example of self-similar group is the \emph{adding machine}:
the group $G=\langle a\rangle\cong\Z$ with $H=\langle a^2\rangle$ and
$(a^{2n})^f=a^n$. The tree consists of all arithmetic progressions
with stride a power of $2$, connected by inclusion, and $\Z$ acts
naturally by translation. This action is finite-state: for the
transversal $T=\{0,1\}$ and $g=a^n$, one may choose
$S=\{1,a,\dots,a^n\}$. The same argument shows that $\Z_p$ is
self-similar for every prime $p$.

%%%%%%%%%%%%%%%%%%%%%%%%%%%%%%%%%%%%%%%%%%%%%%%%%%%%%%%%%%%%%%%%
\section{Barakah}\label{ss:barakah}

For $G_1,G_2$ two self-similar groups, their direct product
$G_1\times G_2$ is self-similar: given $f_i\colon G_i\ge H_i\to G_i$
core-free, one has
$(f_1,f_2)\colon G_1\times G_2\ge H_1\times H_2\to G_1\times G_2$
core-free, and
$[G_1\times G_2:H_1\times H_2]=[G_1:H_1]\cdot[G_2:H_2]$. This shows
that $G^d$ is self-similar for all $d<\omega$.

The construction for $G^d$ may be made more ``economical'' by
considering $\dot f\colon G^d\ge H\times G^{d-1}\to G^d$ given by
$(h_1,g_2,\dots,g_d)^{\dot f}=(g_2,\dots,g_d,h_1^f)$.

As a partial step in the direction of showing that $G^{(\omega)}$ is
self-similar as soon as $G$ is, we give the following general result,
inspired by a construction of Dantas and
Sidki~\cite{dantas-sidki:wreath}*{Theorem~2}:
\begin{prop}\label{prop:c2}
  Let $G$ be a self-similar group. Then $G^{(\omega)}\rtimes C_2$ is
  self-similar, for any free action of $C_2$ on $\omega$.
\end{prop}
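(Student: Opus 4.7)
The plan is to construct a virtual endomorphism $\tilde f\colon \tilde G\ge\tilde H\to\tilde G$ for $\tilde G:=G^{(\omega)}\rtimes C_2$ that uses the free $C_2$-action to supply what $G^{(\omega)}$ lacks on its own: a destination for the ``dropped'' coordinate of a shift. Writing $\omega=L\sqcup R$ with $\sigma\colon L\leftrightarrow R$ the free involution, and fixing bijections $L\cong R\cong\N$, an element of $G^{(\omega)}$ is a pair $(\bar g_L,\bar g_R)$ with $\bar g_L=(g_0^L,g_1^L,\dots)$ and $\bar g_R=(g_0^R,g_1^R,\dots)$.

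I take $\tilde H:=\{(\bar g_L,\bar g_R)\in G^{(\omega)} : g_0^L\in H\}$, of index $2[G:H]$ in $\tilde G$, and set
\[
\tilde f(\bar g_L,\bar g_R):=\bigl(((g_0^L)^f,\,g_1^R,\,g_2^R,\dots),\;(g_1^L,\,g_2^L,\,g_3^L,\dots)\bigr).
\]
That $\tilde f$ is a homomorphism is then a one-line coordinatewise check using only that $f$ itself is a homomorphism; crucially, nothing is ever written into an already-occupied coordinate, so no noncommutativity obstruction arises, and $\tilde H$ is never required to be $\sigma$-invariant (indeed it isn't).

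The heart of the proof is core-freeness. Let $K\le\tilde H$ satisfy $K\triangleleft\tilde G$ and $K^{\tilde f}\le K$, and let $N_n^L,N_n^R$ denote the coordinate projections of $K$. Each is normal in $G$ (from $K\triangleleft G^{(\omega)}$), and the $\sigma$-invariance of $K$ collapses them to a single sequence $N_n\triangleleft G$; the containment $K\le\tilde H$ then forces $N_0\le H$. Projecting the inclusion $K^{\tilde f}\le K$ onto the $L$-component at position $0$ yields $N_0^f\le N_0$, so by core-freeness of $f$ we get $N_0=1$. Projecting the same inclusion onto the $R$-component at position $n$ yields $N_{n+1}\le N_n$, so from $N_0=1$ the whole sequence collapses and $K=1$.

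The main obstacle is to pick $\tilde f$ that is simultaneously (i)~a genuine homomorphism and (ii)~such that iterating it actually routes every coordinate through the one ``contraction slot'' at $L$-position $0$. Naive variants such as $\tilde f(\bar g_L,\bar g_R)_0=(g_0^L)^f\cdot g_1^L$ fail (i) for non-abelian $G$, while pure shifts leave $\tilde f$-invariant normal subgroups supported at arbitrarily high indices inside the core. The device of sending the $L$-tail into the $R$-side rather than back into $L$ is exactly where the free $C_2$-action is essential, and it produces a strictly descending chain of projections which collapses the instant $N_0$ is killed by core-freeness of $f$.
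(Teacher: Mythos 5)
Your proof is correct and matches the paper's: after identifying your $L/R$ indexing with the paper's interleaved even/odd coordinates, your $\tilde H$ is exactly the paper's $\dot H=H\times G^{(\N-\{0\})}$, and your $\tilde f$ is essentially the same coordinate shuffle, up to the inessential difference that it discards the $R$-position-$0$ coordinate into the kernel (harmless, since virtual endomorphisms need not be injective) where the paper's $\dot f$ permutes it back into the $L$-side. Your core-freeness argument via the descending chain of coordinate projections $N_{n+1}\le N_n$, with $N_0=1$ forced by core-freeness of $f$, is a clean repackaging of the paper's induction on the first non-trivial coordinate of an element of $K$.
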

\begin{proof}
  Let the self-similarity of $G$ be expressed by a core-free map
  $f\colon G\ge H\to G$. Fixing notation, consider
  $\dot G=G^{(\N)}\rtimes\langle\sigma|\sigma^2\rangle$ with
  $(g_0,g_1,\dots,g_{2n},g_{2n+1},\dots)^\sigma=(g_1,g_0,\dots,g_{2n+1},g_{2n},\dots)$.

  Define $\dot H=H\times G^{(\N-\{0\})}$, so
  $[\dot G:\dot H]=2[G:H]$, and $\dot f\colon\dot H\to\dot G$ by
  \[(a_0,a_1,a_2,a_3,\dots)^{\dot f}=(a_0^f,a_2,a_1,a_4,a_3,\dots).\]

  Consider a normal subgroup $K\triangleleft\dot G$ with $K\le \dot H$
  and $K^{\dot f}\le K$; we will show $K=1$. Assume on the contrary
  that $K$ contains a non-trivial element
  $\dot a=(a_0,\dots,a_n,\dots)$. Since
  $\dot a^{\dot f}=(a_0^f,\dots)$ and by assumption $f$ is core-free,
  we have $a_0=1$, namely $K\le1\times G^{(\N-\{0\})}$. Let
  now $n\in\N$ be minimal such that $a_n\neq1$. If $n$ is odd, then
  $\dot a^\sigma=(1,\dots,1,a_n,\dots)$ with the `$a_n$' in position
  $n-1$; and $\dot a^\sigma\in K$ because $K$ is normal in $G$. If
  $n>0$ is even, then $\dot a^{\dot f}=(1,\dots,1,a_n,\dots)$ with
  again the `$a_n$' in position $n-1$; and $\dot a^{\dot f}\in K$
  because $K$ is $f$-invariant. In all cases, we obtain an element of
  $K$ with smaller $n$, and eventually an element $\dot a\in K$ with
  $a_0\neq 1$, a contradiction.
\end{proof}

%%%%%%%%%%%%%%%%%%%%%%%%%%%%%%%%%%%%%%%%%%%%%%%%%%%%%%%%%%%%%%%%
\section{Proof of the first part of Theorem~\ref{thm:zomega}, existence of an action}

We fix once and for all a model of the binary rooted tree $\tree$: its
vertex set is $\{0,1\}^*$, and there is an edge from $v$ to $v x$ for
all $v\in\{0,1\}^*$ and all $x\in\{0,1\}$. The root is the empty
sequence $\lambda$, there is a vertex $0$ at distance $1$ from
$\lambda$, and the map $\tau\colon\tree\to\tree_0$ is given by
$v\mapsto 0v$.

The multiplicative group of $\Z_2$ is $1+2\Z_2$. Choose any
$\eta\in2\Z_2^\times$; namely a $2$-adic that is
$\equiv2\pmod4$. Every $a\in\Z_2$ then admits a unique
\emph{base-$\eta$ representation}:
\[a=\sum_{i\ge0} a_i \eta^i,\qquad a_i\in\{0,1\}.\]

The boundary of $\tree$ is naturally identified with $\{0,1\}^\omega$,
and also with $\Z_2$ under the map
\begin{equation}\label{eq:boundary}
  x_0 x_1\dots\leftrightarrow\sum_{i\ge0} x_i \eta^i.
\end{equation}

We have a natural action of $\Z_2$ on the boundary $\partial\tree$,
given by translation; and we claim that this action comes from a
self-similar action of $\Z_2$ on $\tree$. For this, we write the
groups $G=\Z_2$ and $H=2\Z_2$ and $f\colon H\to G$ given by
$a\mapsto a/\eta$. It is straightforward to see that the action of
$\Z_2$ on the boundary coincides with the action of $\Z_2$ by
translation on itself, via the identification~\eqref{eq:boundary}.

We consider now the additive subgroup $G\coloneqq\Z[1/\eta]\cap\Z_2$
of $\Z_2$, and claim that it is a self-similar subgroup of $\Z_2$.
The subgroup $H$ here is $G\cap2\Z_2$, with $[G:H]=2$, and the map $f$
is the restriction to $H$ of the original map $f\colon 2\Z_2\to\Z_2$
given by $a\mapsto a/\eta$. We have $H^f\subseteq G$, since
\[H^f=(\Z[1/\eta]\cap2\Z_2)^f=(\Z[1/\eta]\cap2\Z_2)/\eta\subseteq\Z[1/\eta]\cap\Z_2=G.\]
The Theorem's first claim follows since the action of $G$ on $\tree$
is faithful, and $G\cong\Z^{(\omega)}$ as soon as $\eta$ is
transcendental.

Note that the construction of $f$ may be made quite explicit, as
follows. We may start with $G=\Z^{(\N)}$ and
$H=2\Z\times\Z^{(\N-\{0\})}$, and define the virtual endomorphism
$f\colon H\to G$ by
$(2a_0,a_1,a_2,\dots)\mapsto(a_0+\alpha_1 a_1+\alpha_2
a_2+\cdots,a_0,a_1,a_2,\dots)$ for appropriate choices of
$\alpha_n\in\{0,1\}$, defined as follows. The $0$th basis vector of
$G$ is addition by $1$, and for $n\ge1$ the $n$th basis vector of $G$
is addition by $p_n(1/\eta)$ for some integral polynomial $p_n$ of
degree at most $n$ chosen such that $p_n(1/\eta)\in 2\Z_2$. One starts
with $p_0(t)=2$ and chooses $\alpha_n\in\{0,1\}$ such that
$p_{n+1}(t)\coloneqq t p_n(t)-\alpha_{n+1}$ satisfies
$p_{n+1}(1/\eta)\in 2\Z_2$.

%%%%%%%%%%%%%%%%%%%%%%%%%%%%%%%%%%%%%%%%%%%%%%%%%%%%%%%%%%%%%%%%
\section{Proof of the second part of Theorem~\ref{thm:zomega}, state-closed subgroups}\label{ss:second}

We prove a more general statement than that claimed in
Theorem~\ref{thm:zomega}. Given a self-similar group
$f\colon G\ge H\to G$, a \emph{semi-invariant} subgroup is $G_0\le G$
with $(G_0\cap H)^f\le G_0$. The self-similar action of $G$ induces a
self-similar action $f\restriction\colon G_0\ge G_0\cap H\to G_0$ on
$G_0$ and, if $G_0$ is normal, a quotient action
$\overline f\colon G/G_0\ge H/(H\cap G_0)\to G/G_0$. The following
result may be of independent interest:
\begin{prop}\label{prop:semiinvariant}
  Let $f\colon G\ge H\to G$ define a self-similar action of the group
  $G\cong\Z^{(\omega)}$. If there exists a finite-rank semi-invariant
  subgroup $G_0\le G$ with $[G:H]=[G_0:G_0\cap H]$, then $f$ has a
  non-trivial core.
\end{prop}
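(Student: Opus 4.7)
The goal is to produce a non-zero element of $H_\omega = \bigcap_n H_n$; since $G$ is abelian every subgroup is normal, so the $f$-core of $H$ coincides with $H_\omega$. Set $m = [G:H] = [G_0:L_1]$ with $L_1 = G_0\cap H$, and let $V = G\otimes\Q$, $V_0 = G_0\otimes\Q$, and $M = f_\Q\restriction_{V_0}\colon V_0\to V_0$; semi-invariance of $G_0$ makes $M$ well-defined. The matching-index hypothesis forces $G = G_0 + H$, so $f$ descends to an honest endomorphism $\bar f\colon\bar G\to\bar G$ of $\bar G = G/G_0$. Replacing $G_0$ by its purification $V_0\cap G$ (still finite-rank semi-invariant with matching index), we may assume $G_0$ is pure, so $G = G_0\oplus H''$ and $H = L_1\oplus H''$ for some complement $H''$.

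The first two reductions are to the case that $M$ and $\bar f$ are both injective. If $\ker M\ne 0$, then $\ker M\cap G_0$ is a full-rank lattice in $\ker M$, and multiplying by $m$ (using that $G_0/L_1$ has exponent dividing $m$, so $mG_0\subseteq L_1$) yields a non-zero element of $L_1\cap\ker f\subseteq H_\omega$, giving a non-trivial core. Similarly, if $\ker\bar f$ has infinite $\Z$-rank, then $f^{-1}(G_0)\subseteq H$ has infinite rank (as $f^{-1}(G_0)/L_1\cong\ker\bar f$) and $f$ maps it into the finite-rank $G_0$, forcing $\ker f$ to have infinite rank. Otherwise $G_1 := G_0 + f^{-1}(G_0)$ is a strictly larger finite-rank semi-invariant subgroup still satisfying $[G_1:G_1\cap H] = m$, by a routine verification; iterating this enlargement, we may assume $\bar f$ is injective on $\bar G$, and hence $f_\Q$ is injective on all of $V$.

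Under these reductions, $f_\Q$ has block-upper-triangular form $\bigl(\begin{smallmatrix}M & \alpha\\ 0 & \bar f_\Q\end{smallmatrix}\bigr)$ in the splitting $V = V_0\oplus V''$. The crucial case is $\alpha = 0$: here $V''$ is $f_\Q$-invariant, so $f(H'')\subseteq V''\cap G = H''$ (using the direct-sum decomposition), making $H''$ a non-trivial $f$-invariant subgroup of $H$ and hence the desired core. For the general case one attempts to solve the Sylvester equation $MS - S\bar f_\Q = \alpha$ for some $S\colon V''\to V_0$ with $S(H'')\subseteq L_1$: any such solution reduces the problem to the $\alpha = 0$ case by replacing $V''$ by the $f_\Q$-invariant complement $\{v - Sv : v\in V''\}$. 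The main obstacle is establishing solvability of Sylvester in this mixed finite/infinite-dimensional setting, and, in the residual case where $M$ and $\bar f_\Q$ share an eigenvalue $\mu$, directly constructing an eigenvector of $f_\Q$ inside the common generalized eigenspace whose image under a suitable integer scaling lies in $L_1\oplus H''\subseteq H$, thereby exhibiting the required non-trivial element of $H_\omega$.
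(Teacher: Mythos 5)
Your reductions are sound and run parallel to the paper's own setup: purifying $G_0$, observing $G=G_0H$ so that $f$ descends to $\bar f$ on $G/G_0$, splitting $G=G_0\oplus H''$ with $H=L_1\oplus H''$, and disposing of $\ker f\neq0$ at the outset (indeed $\ker f$ is itself a non-trivial core, $G$ being abelian). But the heart of the matter --- producing a non-trivial $f$-invariant subgroup of $H$ when the off-diagonal block $\alpha$ is non-zero --- is precisely the step you leave open, and it is not a routine Sylvester computation. Two obstructions block that route. First, $\bar f_\Q$ acts on an infinite-dimensional space, so the finite-dimensional solvability criterion (disjoint spectra of $M$ and $\bar f_\Q$) is unavailable, the usual series solutions such as $\sum_{n\ge0}M^{-n-1}\alpha\bar f_\Q^{\,n}$ need not make sense, and your ``residual shared-eigenvalue case'' is only a statement of intent, since $\bar f_\Q$ need not have any eigenvectors. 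Second, even a rational solution $S$ only yields an invariant complement in $V$; you need the integrality $S(H'')\subseteq L_1$ for $\{h-Sh\}$ to land in $H$, and since $H''$ is infinitely generated you cannot clear denominators. A smaller but genuine gap: the iteration replacing $G_0$ by $G_0+f^{-1}(G_0)$ to force $\bar f$ injective strictly increases the rank at each step, and you give no reason for it to terminate.

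The paper sidesteps the invariant-complement problem entirely, and the comparison is instructive. It takes $G_0$ of \emph{minimal} rank among semi-invariant subgroups with matching index (a hypothesis you never invoke), lets $\chi\in\Z[t]$ be the minimal polynomial of $f_0=f\restriction L_1$, and sets $K=\langle v\chi(f)f^n\colon n\ge0\rangle$ for a suitable $v\in[G:H]H''$. A direct computation with the ``differential operator'' identity
\[\chi(f)f^n=\begin{pmatrix}0&0\\ f_1^n\,d\chi & f_1^n\chi(f_1)\end{pmatrix}\]
shows that the $G_0$-component of each generator lies in $[G:H]G_0\le L_1$ while the other component lies in $H''\le H$, so $K\le H$ and $K^f\le K$; minimality of the rank forces $\chi$ to have no monic factor, whence $\chi(f)\neq0$ and $K\neq1$. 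In other words, rather than conjugating $f$ to block-diagonal form, one kills the $G_0$-coordinate by applying the integral operator $\chi(f)$, so no spectral hypothesis and no integrality of a rational solution are ever required. As written, your argument establishes only the case $\alpha=0$ and the degenerate cases, and so has a genuine gap at its central step.
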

\begin{proof}
  Without loss of generality, we may assume that $G_0$ has minimal
  rank among the semi-invariant subgroups with
  $[G:H]=[G_0:G_0\cap H]$.  Set $H_0\coloneqq H\cap G_0$ and
  $f_0\coloneqq f\restriction H_0\colon H_0\to G_0$.  Denote $G_0$'s
  isolator subgroup by
  $\sqrt{G_0}=\{a\in G\mid a^n\in G_0\text{ for some }n\neq0\}$. Then
  $\sqrt{G_0}$ has the same rank as $G_0$, it has a complement in $G$,
  and is also semi-invariant; so we may replace $G_0$ by its
  isolator. Let $G_1$ denote a complement of $G_0$, so we have
  $G=G_0\times G_1$ and $H=H_0\times G_1$ and may write in block
  matrix form
  \[f=\begin{pmatrix}f_0 & 0\\ g & f_1\end{pmatrix}\text{ with }f_1\colon G_1\to G_1\text{ and }g\colon G_1\to G_0.\]

  Since $G_0$ is finite-dimensional, $f_0$ admits a minimal polynomial
  $\chi\in\Z[t]$. By minimality of $G_0$'s rank, no factor of $\chi$
  is monic. If $\chi(f)=0$, then $f_1$ would be block triangular with
  diagonal blocks factors of $f_0$; now these diagonal blocks have
  monic minimal polynomial, a contradiction. Therefore $\chi(f)\neq0$,
  so there exists $v\in [H:G] G_1$ with $v\chi(f)\neq0$. Set
  \[K\coloneqq \langle v\chi(f)f^n\colon n\ge0\rangle.\]
  We claim that $K$ is an $f$-invariant normal subgroup of $H$.

  To aid the computations, let us introduce a ``differential
  operator'' $d\colon\Z[t]\to\Z\langle f_0,g,f_1\rangle$ given by
  \[d(1)=0,\quad d(t)=g,\quad d(p\cdot q)=dp\cdot q(f_0)+p(f_1)\cdot
    dq.
  \]
  We then have $f_1 d\chi = d(t\chi)=d(\chi t)=\chi(f_1)g+d\chi f_0$,
  and deduce
  \[\chi(f)=\begin{pmatrix}0&0\\d\chi & \chi(f_1)\end{pmatrix},\text{ and more generally }\chi(f)f^n=\begin{pmatrix}0&0\\ f_1^n d\chi & f_1^n\chi(f_1)\end{pmatrix};\]
  the first computation follows directly from matrix multiplication,
  and the second one by applying $n$ times the identity
  $f_1 d\chi=\chi(f_1)g+d\chi f_0$.
  
  Now $v f_1^n\in [G:H]G_1$ for all $n$, so
  $v f_1^n d\chi\in [G:H]G_0\le H_0$ for all $n$, and therefore
  $K\le H$. The invariance $K^f\le K$ is automatic, and normality
  holds trivially because $G$ is abelian.
\end{proof}

It immediately follows from Proposition~\ref{prop:semiinvariant} that
no element of $G$ may be finite-state: its stateset would generate a
semi-invariant, finite-rank subgroup of $G$. The proof of
Theorem~\ref{thm:zomega} is finished.

It would be interesting to know if the same result holds in the
following generality: $G\cong A^{(\omega)}$ for a self-similar group
$A$, and $G_0$ a subgroup of $A^n\le A^{(\omega)}$ for some
$n<\omega$.

%%%%%%%%%%%%%%%%%%%%%%%%%%%%%%%%%%%%%%%%%%%%%%%%%%%%%%%%%%%%%%%%
\section{Lagniappe}\label{ss:lagniappe}

Let us finally give in what appears to be a more natural setting the
main result of~\cite{dantas-sidki:wreath}. Recall that for $H\le G$ we
denote by $H\backslash G$ the set of left cosets, namely the set of
$H g$ with $g\in G$.
\begin{prop}\label{prop:wreath}
  Let $G$ be self-similar with parabolic subgroup $H_\omega$, and let
  $A$ be a finite abelian group. Then
  $A^{(H_\omega\backslash G)}\rtimes G$ is self-similar, and is
  finite-state whenever $G$ is.
\end{prop}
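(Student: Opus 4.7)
The plan is to construct, for $W\coloneqq A^{(X)}\rtimes G$ with $X=H_\omega\backslash G$ and $B\coloneqq A^{(X)}$, a virtual endomorphism $\dot f\colon\dot H\to W$. Any choice of the form $\dot H=B\rtimes H$ with $\dot f(\phi,h)=(F(\phi),h^f)$ for some $F\colon B\to B$ fails core-freeness, since then $B\triangleleft W$ lies in $\dot H$ and is $\dot f$-invariant, yielding a non-trivial core. The construction must therefore ``mix'' the $B$- and $G$-factors, as in the Grigorchuk--\.Zuk self-similar action of the lamplighter~$\Z/2\wr\Z$, of which this is a broad generalization.

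Building on the strategy of~\cite{dantas-sidki:wreath}, I would let $\dot H$ be the kernel of the augmentation homomorphism $\epsilon\colon W\to A$, $\epsilon(\phi,g)\coloneqq\sum_{x\in X}\phi(x)$; this is well-defined because $G$ permutes $X$ (so $\epsilon$ is a homomorphism trivial on the $G$-factor), and hence $[W:\dot H]=|A|$ and $\dot H$ contains $G\le W$. The virtual endomorphism $\dot f$ is then defined by extracting the section at a distinguished child of the root in a tree-action of $W$ of branching $|A|$: on a generator of $G$, this section equals its $f$-image corrected by a lamp value at $x_0$ (so that the image lands outside $B$); on a ``difference'' $\delta_x-\delta_{x'}\in\dot H\cap B$, it is the corresponding difference shifted along the rays via the injection of Lemma~\ref{lem:inj}.

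The homomorphism property follows by checking it on generators, using that $f$ is a homomorphism and that $\epsilon$ is $G$-equivariant. For core-freeness, a subgroup $K\triangleleft W$ with $K\le\dot H$ and $K^{\dot f}\le K$ has trivial projection to $G$ by core-freeness of $f$, so $K\le B$; then $K$ is $G$-invariant, and iterating $\dot f$ contracts the support of elements of $B$ towards $x_0$. Combined with $G$-transitivity on $X$ and the finiteness of $A$, this forces $K=1$. Finite-stateness then follows because the $\dot f$-iterates of a finite generating set of $W$ (generators of $G$, plus a copy of $A$ at $x_0$) differ from those of $G$ only by bounded $A$-valued corrections, and $A$ is finite. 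The main obstacle is the precise specification of the ``mixing'' in $\dot f$ so that the homomorphism and core-freeness properties hold simultaneously.
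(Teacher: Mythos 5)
Your proposal does not reach a proof: the virtual endomorphism $\dot f$, which is the entire content of the construction, is never written down, and the shape you prescribe for it cannot work as stated. Having chosen $\dot H=\ker\epsilon$, which contains all of $G$, you must define $\dot f$ on every element of $G$; but the self-similar structure of $G$ only supplies a homomorphism $f\colon H\to G$ on the proper subgroup $H$, so ``on a generator of $G$, this section equals its $f$-image'' is undefined for generators outside $H$. This is why the paper takes $\dot H\coloneqq\bigl(\ker\epsilon\cap A^{(H_\omega\backslash G)}\bigr)\rtimes H$, of index $\#A\cdot[G:H]$ rather than $\#A$: the $G$-part of $\dot H$ is $H$, exactly the domain of $f$. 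Your opening diagnosis is also off the mark: the paper's $\dot f$ does \emph{not} mix the two factors. It is triangular, $(\phi,h)^{\dot f}=(F(\phi),h^f)$ with $F$ the push-forward of $\phi$ along $f$ (well defined by Lemma~\ref{lem:inj}); what defeats the obstruction you correctly identify is not mixing but the fact that $B=A^{(H_\omega\backslash G)}$ is \emph{not} contained in $\dot H$ --- only its augmentation kernel is --- so $B$ cannot be the core, and the augmentation kernel itself is not $F$-invariant.

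The core-freeness sketch is also missing its decisive step. Granting $K\le B$, you appeal to ``contraction of supports plus transitivity plus finiteness of $A$,'' but the actual argument is: every element of $K\cap\dot H\cap B$ has trivial augmentation, hence support consisting of at least two $H_\omega$-cosets; take $\phi\in K$ of minimal support size and conjugate so that $\phi(H_\omega)\neq1$; then $\phi^{\dot f}\in K$ is again non-trivial with support $H_\omega(S\cap H)^f$ of cardinality at most $|S|$, with equality forcing $S\subseteq H$; iterating gives $S\subseteq H_n$ for all $n$, hence $S\subseteq H_\omega$ is a single coset, contradicting $|S|\ge2$. Without the minimality argument and the augmentation constraint working together, nothing rules out $K$ containing a single lamp. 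Finally, the Grigorchuk--\.Zuk lamplighter action is a misleading template here: it relies on a genuine mixing of lamp and base coordinates special to $C_2\wr\Z$, and is not an instance of the degree-$(\#A\cdot[G:H])$ construction the proposition actually requires. As it stands the proposal is a plan whose central object is acknowledged (in your last sentence) to be unspecified, so it cannot be assessed as correct.
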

Often $A^{(H_\omega\backslash G)}\rtimes G$ is called a ``restricted
permutational wreath product'', or a ``lamplighter group''; the
terminology comes from the picture of a road network identified with
$H_\omega\backslash G$, with lamps at each position; an element of the
group is a sequence of movements of the lamplighter (controlled by
$G$) and changes to the intensity of the lamp in front of him/her
(controlled by $A$).
\begin{proof}
  Let the self-similarity of $G$ be expressed as $f\colon G\ge H\to G$.
  Set $\dot G\coloneqq A^{(H_\omega\backslash G)}\rtimes G$ and
  \[\dot H\coloneqq\Big\{\phi\colon H_\omega\backslash G\to A\text{ finitely supported },\prod_{H_\omega a\in H_\omega\backslash G}\phi(H_\omega a)=1\Big\}\rtimes H.
  \]
  Note that we have $\dot H=[A^{(H_\omega\backslash G)},G]\rtimes H$
  and $[\dot G:\dot H]=\#A\cdot[G:H]$. We define $\dot f$ as follows:
  \[(\phi,h)^{\dot f}=(H_\omega a\mapsto\prod_{(H_\omega b)^f\subseteq
      H_\omega a}\phi(H_\omega b),h^f).
  \]
  Note that the product contains at most one term, by
  Lemma~\ref{lem:inj}.

  Consider a normal subgroup $K\triangleleft\dot G$ with $K\le \dot H$
  and $K^{\dot f}\le K$; we will show $K=1$. Firstly, the assumption
  that $f$ be core-free implies $K\le A^{(H_\omega\backslash G)}$. Assume
  therefore that $K$ contains a non-trivial element
  $\phi\colon H_\omega\backslash G\to A$ with $\prod\phi=1$. In particular,
  the support of $\phi$ contains at least two points. Choose $\phi$
  with support of minimal cardinality.

  Since $K$ is normal in $\dot G$, we may conjugate $\phi$ by $G$ so
  as to ensure $\phi(H_\omega)\neq1$; we let $S$ denote its support,
  written as a union of $H_\omega$-cosets in $G$, so we have
  $H_\omega\subseteq S$. Write $\phi_0\coloneqq\phi$, and consider
  $\phi_1\coloneqq\phi_0^{\dot f}$; it is given by
  $\phi_1(H_\omega a^f)=\phi_0(H_\omega a)$ for all $a\in H$, extended
  by the identity away from $H_\omega H^f$. In particular,
  $\phi_1(H_\omega)\neq1$ so $\phi_1\neq1$, and the support of
  $\phi_1$ is $(S\cap H)^f$. Continuing in this manner with
  $\phi_n=\phi_{n-1}^{\dot f}$, the support of $\phi_n$ is
  $H_\omega(S\cap H_n)^f$, and always contains $H_\omega$. Since the support
  of $\phi_n$ is by assumption of same cardinality as $S$, we have
  $S\subseteq H_n$ for all $n$, so $S\subseteq H_\omega$, and
  therefore $S=H_\omega$, contradicting the fact that the support of
  $\phi$ contains at least two cosets of $H_\omega$.

  Assume finally that $G$ is finite-state for the choice of
  transversal $T\subseteq G$; let $t_0\in T$ be the representative of
  $H$. We write
  $\dot A\coloneqq\{\phi\colon H_\omega\backslash G\to A\text{
    supported at }H_\omega\}\cong A$, write $\dot a$ for the element
  of $\dot A$ with value $a$ at $H_\omega$, and choose
  $\dot A\times T$ as transversal for $\dot H$ in $\dot G$. Since
  $\dot G$ is generated by $\dot A\cup G$, it suffices to show that
  these elements are finite-state. For this, we compute $\hat{\dot f}$
  on them.  If $g\in G$ and $g^{\hat f}=(t\mapsto g_t)\cdot\pi$ with
  $\pi\in\sym T$, then
  \[g^{\hat{\dot f}}=\big((\dot a,t)\mapsto g_t\big)\cdot\big((\dot a,t)\mapsto t^\pi\big)\]
  while if $b\in A$ then
  \[(\dot b)^{\hat{\dot f}}=\left((\dot a,t)\mapsto\Big\{\begin{array}{l}\dot a\text{ if }t=t_0\\1\text{ else}\end{array}\Big\}\right)\cdot\big((\dot
    a,t)\mapsto\dot{ab}\big).
  \]
  In the first case, the set of states of $g$ is the same in $G$ and
  in $\dot G$, while in the second case the set of states of $\dot a$
  is $\{\dot a,1\}$.
\end{proof}

There are free, self-similar actions of $\Z^d$ for all $d<\omega$, so
in particular $\Z^d$ can be made self-similar with trivial parabolic
subgroup. This recovers~\cite{dantas-sidki:g_p_d}*{Theorem~4} stating
that $C_p\wr\Z^d$ is self-similar. Their finite-state example for
$d=p=2$ is essentially the same as the one given by the above
proposition.

\begin{bibsection}
\begin{biblist}
\bibselect{math}
\end{biblist}
\end{bibsection}

\end{document}